\documentclass[12pt]{amsart}

\usepackage[pdftex]{graphicx}

\usepackage[T2A]{fontenc}
\usepackage[utf8]{inputenc}
\usepackage[english]{babel}

\usepackage{amsmath}
\usepackage{amssymb}
\usepackage{amscd}
\usepackage{amsfonts,amsmath,amsthm}

\usepackage[matrix,arrow,curve]{xy}
\usepackage{xfrac}
\usepackage{color}

\newtheorem{theorem}{Theorem}
\newtheorem{prop}{Proposition}

\newtheorem{lemma}{Lemma}
\newtheorem*{main lemma}{The main lemma}
\newtheorem*{remark}{Remark}
\newtheorem{definition}{Definition}
\newtheorem{dfn}{Definition}

\title[Each simplex in $\mathbb{R}^4$ has a point with   $14$ emanating normals]{Each simplex in $\mathbb{R}^4$ has  a point with at least $14$ emanating normals to its boundary}

\author{Ivan Nasonov, Gaiane Panina}

\address{I. Nasonov: St. Petersburg State University   wanua-nasonov-i04@yandex.ru; G. Panina: St. Petersburg department of Steklov institute of mathematics
 gaiane-panina@rambler.ru  }
\keywords{Morse theory, bifurcation, critical point \ \   MSC  52B70 }
\date{\today}

\begin{document}

\begin{abstract}

We prove that each simplex in $\mathbb{R}^4$   contains a point  with at least $14$ emanating normals to the boundary. 

Earlier, the authors proved that each generic simple polytope in $\mathbb{R}^4$   contains a point  with at least $12$ emanating normals to the boundary. 

This problem belongs to a piecewise linear counterpart of a longstanding problem concerning normals to smooth convex bodies.
\end{abstract}

\maketitle

\section{Introduction}

\medskip

We continue studying concurrent normals problem for convex polytopes.

\begin{dfn}
  Let $\mathbf{P}\subset \mathbb{R}^4$  be a compact convex polytope with non-empty interior, $y\in Int~\mathbf{P}$.
 A \textit{normal to the boundary of $\mathbf{P}$ emanating from }$y$ is a line $yz$ such that $z\in \partial \mathbf{P}$   such that  $yz$ is orthogonal to some support plane containing $z$. 
\end{dfn}

It is known that:\begin{itemize}
                   \item Each generic  polytope in $\mathbb{R}^3$ has a point in its interior with at least $10$ normals to the boundary, 
see \cite{NasPan} and  a slightly different proof in \cite{Angst}. 
\item There exists a simplex in $\mathbb{R}^3$ whose maximal number of emanating normals is equal to $10$.
                   \item Each generic 
    polytope  $\mathbf{P} \subset \mathbb{R}^4$   has a point with (at least) $12$ emanating normals to the boundary  \cite{NasPan2}.
                   \item  Moreover, each generic 
    polytope  $\mathbf{P} \subset \mathbb{R}^n$   has a point with (at least) $2n+4$ emanating normals to the boundary \cite{NasPan2}.
                 \end{itemize}

Our main result is:

\begin{theorem}\label{Thmmain}
  Each generic \footnote{The definition of genericity is given in Section \ref{SecGen}} simplex in $\mathbb{R}^4$ contains a point with at least $14$ emanating normals to the boundary.
\end{theorem}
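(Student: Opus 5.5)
The plan is to use the Morse-theoretic framework of the earlier papers \cite{NasPan, NasPan2}. Normals emanating from $y$ correspond exactly to critical points of the squared distance function $F_y(z)=|y-z|^2$ restricted to $\partial\mathbf{P}$. For a simplex, such a critical point lies in the relative interior of some face $\sigma$ (a vertex, edge, 2-face or facet). It is exactly the point where $y$ projects orthogonally onto $\mathrm{aff}(\sigma)$, provided the projection lands inside $\sigma$ and $y$ lies in the appropriate normal cone. We therefore get a cell decomposition of $\mathrm{Int}\,\mathbf{P}$: each face $\sigma$ determines a region $U_\sigma$ (the set of $y$ having a normal to $\sigma$), and these regions are cut out by hyperplanes. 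The task is to find a point lying in at least $14$ of the regions $U_\sigma$.

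\textbf{Step 1: the Morse count.} For a generic point $y$, $F_y$ is a min-type PL Morse function on $\partial\mathbf{P}\cong S^3$. The critical point on a face of dimension $k$ has Morse index $3-k$: facets give minima and vertices give maxima. Write $c_i(y)$ for the number of index-$i$ critical points. The Morse inequalities and $\chi(S^3)=0$ give $c_0-c_1+c_2-c_3=0$. Together with $c_0,c_3\ge 1$, this yields only a weak bound.

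\textbf{Step 2: bifurcation analysis.} Move $y$ along a path, or let $y$ approach a special point. I would take the path to run from a point where many facets are ``visible'' (i.e. many $c_0$) to a vertex-adjacent or incenter-like point. Critical points are born or die in pairs of adjacent indices whenever $y$ crosses a wall $\partial U_\sigma$. The earlier argument giving $12$ for general simple polytopes used the following: at the point $y^*$ maximizing $\min_z F_y$ (the center of the largest inscribed ball, or more precisely a point realizing a max--min), many facets are simultaneously at minimal distance, which forces extra critical points. For a simplex the inscribed ball touches all $5$ facets. So at the incenter $c_0=5$, since each facet contains the foot of the perpendicular.

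\textbf{Step 3: counting at the incenter.} I would then aim to show that in a simplex, a neighborhood of the incenter contains a point with $c_0=5$ and $c_3\ge 1$, and moreover with $c_1+c_2\ge 8$. By Euler's relation, $5+c_2=c_1+c_3$. For example, $c_0=5$, $c_3=2$, $c_1=4$, $c_2=1$ gives $12$, and we need two more. The extra pair should come from the structure of the simplex. Every pair of facets meets in a 2-face, and the $\binom52=10$ two-faces each give an index-1 candidate. The point on the bisector hyperplane of two facets, near the incenter, projects into the 2-face iff a certain dihedral-type condition holds. So I would use the min-type structure to argue that the function $\min_z F_y$ has its maximum in a configuration where the sublevel sets $\{F_y\le t\}$ must merge five disjoint facet-neighborhoods. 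Merging five components requires at least $4$ index-1 points. Then, using the special combinatorics of the simplex (each 2-face is adjacent to exactly two facets, and the complete graph $K_5$ structure), I would show that the merging graph is forced to have cycles. The cycles produce index-2 critical points, and by duality at the maximal end at least $2$ vertices are far.

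\textbf{Main obstacle.} I expect the hardest step to be upgrading from $12$ to $14$: showing that, at an appropriately chosen point (perturbed incenter, or the point maximizing the second-smallest critical value), the number of index-1 critical points on 2-faces is at least $5$ or $6$ rather than $4$. The first thing I would try is a case analysis over the acute/obtuse structure of the dihedral angles, using that in a simplex not all dihedral angles can be obtuse. Failing that, I would run a continuity/degree argument along the segment from the incenter to the circumcenter-like point.
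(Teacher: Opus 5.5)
There is a genuine gap. Step 3 carries the whole content of the theorem, and it is not carried out. The target you set for it is also not what the paper proves.

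At the incenter $O$ you correctly get $c_0=5$ and $c_1\ge 4$. By Euler's relation $c_0-c_1+c_2-c_3=0$, the total count at $O$ is $5+c_1+c_2+c_3=10+2c_2$. So you need at least two edges of the simplex carrying index-$2$ critical points, and nothing in your outline forces this. The claims that the merging graph of the five facet minima ``is forced to have cycles'' and that ``by duality at least $2$ vertices are far'' are unsupported. Even one forced cycle ($c_1\ge 5$) only gives $c_2\ge 1$, i.e.\ $12$ normals.

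Moreover, $O$ is generically off the bifurcation set, so a small neighbourhood of $O$ has exactly the same count as $O$. You would therefore be proving that the incenter itself has $14$ normals. The paper never claims this, and a dihedral-angle case analysis offers no visible mechanism for it.

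The missing idea is to count near the vertex $V$ farthest from $O$, rather than at $O$.
\begin{itemize}
\item Since $|OW|<|OV|$ for every other vertex $W$, each edge $VW$ makes an acute angle with $VO$.
\item Since $O$ is equidistant from the four facets through $V$, the direction of $VO$ is the center $O_\Delta$ of the inscribed sphere of $\Delta=Spher(V)$. Hence $O_\Delta\in Core(\Delta)$, i.e.\ $V$ is perfect.
\item For $y$ on the segment $VO$ close to $V$, the critical points of $SQD_y$ on faces through $V$ correspond to short critical points of $SQD^{sph}_{O_\Delta}$ on $\partial\Delta$.
\item These give four minima and at least four saddles. In each triangular facet of $\Delta$ the tangency point is a core point, so it projects onto at least two sides. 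The angle condition is automatic because $O_\Delta$ lies on the bisectors, and each side is shared by two facets.
\item Then $\chi(S^2)=2$ forces at least two maxima, so the faces through $V$ carry at least $10$ critical points.
\item Adding $V$ itself (a local maximum, by the acute angles) and the global maximum gives $12$.
\end{itemize}

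One then moves $y$ along the ray and examines the first event:
\begin{itemize}
\item a birth gives $14$;
\item hitting the opposite facet produces an extra minimum, and parity from $\chi(S^3)=0$ gives $14$;
\item a death is excluded, because the old critical points on faces through $V$ move homothetically apart, and an edge saddle cannot reach the global maximum before $y$ exits.
\end{itemize}
This local count at a perfect vertex, combined with the first-event argument, is what lifts the bound from $12$ to $14$. Your outline has neither ingredient.
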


 (In fact, a stronger fact is proven in Proposition \ref{propInc}.)
 
 \medskip
 
 This bound is exact: according to personal communications with Chiara Meroni, there exists a simplex in $\mathbb{R}^4$ with no point having more than $14$ emanating normals.

 \medskip
 
 The paper relies very much on ideas developed in \cite{NasPan} and \cite{NasPanSiersma}. To make the paper self-contained, we collect the constructions and facts in Appendix. We refer the reader to these papers for more detailed explanations of the backgrounds and also for more figures.


\section{Proof of the theorem}\label{SecProof}

Counting normals emanating from a point $y$ amounts to counting critical points of the \textit{squared distance function} $$SQD_y(x)=|x-y|^2:\partial \mathbf{P}\rightarrow \mathbb{R}.$$  Generally, it can be described as Morse function, which gives us some useful tools, see Section \ref{SecActReg} for details.

\bigskip

The key idea of the proof is to use the centers of inscribed spheres. 

\begin{definition}
A sphere is said to be inscribed in a convex polytope $\mathbf{P}\subset \mathbb{R}^n$ if it is contained in $\mathbf{P}$, and the sphere is tangential to (at least) $n+1$ facets of $P$.

A spherical polytope is a polytope that fits in some open hemisphere. It is always assumed to be spherically convex. For a spherical polytope $P\subset S^n$, an inscribed sphere is defined in the same way.
\end{definition}

Simple observations are:\begin{enumerate}
                          \item An inscribed sphere exists for every polytope.
                          \item An inscribed sphere may not be unique. However, for a simplex it is unique.
                          \item For spherical polytopes, the sphere of maximal volume contained in $P$ is not necessarily inscribed.
                        \end{enumerate}
                        
                        \bigskip

For a convex polytope $\mathbf{P}\subset \mathbb{R}^n$ and its vertex $V$ we define a spherical polytope $Spher(V) \subset S^{n-1}$ as an intersection of $\mathbf{P}$ with small sphere, centred in $V$. We assume that this $S^{n-1}$ is equipped with  metric of the standard unit sphere.

Remind that for a convex spherical polytope $P$ (or a polygon) its \textit{core} is defined as $$Core(P) = \{y\in P: |Vy|<\pi/2 \text{ for every vertex $V$ of $P$}\}.$$

\begin{definition}

Let $\Delta\subset S^3$ be a convex spherical tetrahedron. Let $O_{\Delta}$ be the center of the inscribed sphere. The polytope $\Delta$ is called \textit{perfect} if $O_{\Delta}\in Core(\Delta)$.

A vertex $V$ of a convex polytope $\mathbf{P}\subset \mathbb{R}^4$ is called \textit{perfect} if $Spher(V)$ is a perfect  spherical tetrahedron.
\end{definition}
  \bigskip
  
We shall make use of the spherical analog of the squared distance function for spherical polytopes.

Let $P$ be a spherical polytope with $y\in P$. The function $$SQD^{sph}_y(x) =|xy|^2:\partial P\rightarrow \mathbb{R}$$ is called the \textit{spherical squared distance function}. It  also behaves like a Morse function for general $y\in P$, but in comparison with the Euclidean $SQD$, there are new phenomena. For example, maxima are not necessarily attained at vertices. 

  \bigskip
  
\begin{definition}
    A spherical segment $AB\subset S^{n-1}$ is called \textit{short} if $|AB|<\pi/2$; otherwise, it is called \textit{long}.

A critical point of $SQD_y^{sph}$ is short (respectively, long) if the normal segment $yz$ is short (respectively, long).

\end{definition}
 Note that for $y\in Core(P)$ all the critical points are short.

  \begin{lemma}\label{lemmaSaddle}
For a perfect spherical simplex, an edge $e$ contains a short critical point (which is automatically a saddle point) of $SQD^{sph}_{O_\Delta}$  iff $O_{\Delta}$ projects onto $e$. 
\end{lemma}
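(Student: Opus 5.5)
The plan is to reduce the statement to the local geometry of the critical points of $SQD^{sph}_{O_\Delta}$ restricted to the relative interior of an edge $e$ of the spherical tetrahedron $\Delta\subset S^3$. Let $y=O_\Delta$. A point $z$ in the relative interior of $e$ is critical for $SQD^{sph}_y|_{\partial\Delta}$ exactly when $z$ is a critical point of the spherical distance $x\mapsto |xy|$ restricted to the great circle containing $e$, i.e. when the geodesic $yz$ is orthogonal to $e$ at $z$. (This is the same criterion as in the Euclidean case recalled in the Appendix: a point is critical iff the normal $yz$ lies in the normal cone of the face, and for an interior point of an edge this cone is spanned by the two adjacent facet normals.) On the great circle $C\supset e$, the distance to $y$ has exactly two critical points on the open circle, antipodal to each other. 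One is the nearest-point projection $p$ of $y$ to $C$, with $|yp|\le\pi/2$. The other is $-p$, with $|y(-p)|=\pi-|yp|\ge\pi/2$. Hence a \emph{short} critical point on $e$ can only be $p$, and it exists precisely when $p$ lies in the relative interior of $e$, that is, when $O_\Delta$ projects onto $e$. This yields both directions once we check that $p$ really is critical on $\partial\Delta$, not merely on $C$.

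Checking that $p$ is critical on $\partial\Delta$ is where the inscribed-center property enters. The condition for criticality is that the tangent vector at $p$ pointing to $y$ lies in the cone spanned by the outward (or inward) normals of the two facets $F_1,F_2$ meeting along $e$. Since $y=O_\Delta$ is equidistant (at distance $r$) from all facets, $y$ lies on the bisector of the dihedral angle at $e$. The geodesic from $p$ to $y$ is orthogonal to $e$, so it lies in the totally geodesic 2-sphere orthogonal to $e$ at $p$. Inside that 2-sphere it is the bisecting direction of the dihedral angle, which lies strictly inside the cone of the two inward facet normals. So criticality holds, with the direction pointing into $\Delta$ as required for a normal from an interior point. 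Conversely, if $p\notin e$, no interior point of $e$ is short-critical.

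To show the critical point is a saddle, compute the Morse index at $p$ on $\partial\Delta$, which is PL near $p$. Along $e$, $p$ is a minimum of distance, since it is the nearest point of $C$ and $|yp|<\pi/2$. In the transverse direction, moving from $e$ into $F_1$ or $F_2$, the distance decreases. Indeed, the nearest point of $F_i$ to $y$ is its tangency point with the inscribed sphere, at distance $r<|yp|$, and by perfectness (shortness) $p$ is not that tangency point. In the Appendix's Morse theory for min-type/piecewise functions, this makes $p$ a critical point of index 1, i.e. a saddle. The main obstacle I expect is this last step: making rigorous that the distance restricted to each facet $F_i$ decreases as one moves off $e$ perpendicularly, given that the face is spherical. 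I would handle it by noting that on a great 2-sphere the distance to a point at distance $<\pi/2$ is strictly convex with its unique minimum at the tangency point, so it decreases along the geodesic from $p$ toward that point. The shortness assumption ($|yp|<\pi/2$, guaranteed since $y\in Core(\Delta)$ is not needed beyond $|yp|<\pi/2$) ensures convexity holds throughout the region considered.
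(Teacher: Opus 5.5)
Your proof is correct and follows the paper's argument. A critical point in the interior of $e$ requires two things: that $O_\Delta$ project onto $e$, and that the direction from $p$ to $O_\Delta$ lie in the cone of inward facet normals, which is equivalent to the plane through $e$ and $O_\Delta$ making acute angles with $F_1,F_2$. The second condition holds because $O_\Delta$ lies on the bisector of the dihedral angle at $e$; your nearest-point/antipode analysis just makes explicit that the projection is the only short candidate.

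In your optional saddle-index check, the justification for $p$ differing from the tangency point is wrong. It is not a consequence of perfectness, since both points are at distance $<\pi/2$. The actual reason is that a tangency point of the inscribed sphere cannot lie on $e$: the sphere would then be tangent to the great spheres of both $F_1$ and $F_2$ at that point, forcing them to coincide. More directly, by the same bisector observation, the derivative of the distance at $p$ in the inward direction of $F_i$ orthogonal to $e$ is $-\cos(\theta/2)<0$, where $\theta$ is the dihedral angle.
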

\begin{proof} There are two conditions for an edge to contain a saddle point: one is that $y$ projects onto $e$, and the other one
is that  angles between the plane $(ey)$ and the faces incident to $e$ are acute. This is true for the point $y=O_{\Delta}$ since the plane $(ey)$  bisects the angle at the edge $e$.
\end{proof}

 \begin{lemma}\label{lemmaTriangTwoEdges}
Let $F=ABC$ be a spherical triangle, and let $X \in Core(F)$. Then $X$ projects onto at least two edges of $F$. 
\end{lemma}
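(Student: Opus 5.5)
The plan is to reduce the statement to a condition on the angles at the vertices, and then use the hypothesis $X\in Core(F)$, through the spherical law of cosines, to rule out two edges failing at once.

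\emph{Step 1: projection criterion.} For an edge, say $AB$, the foot of the perpendicular from $X$ to the great circle $(AB)$ lies on the segment $AB$ iff the angles $\angle XAB$ and $\angle XBA$ are both at most $\pi/2$. This uses that $|XA|,|XB|<\pi/2$, so no antipodal phenomena occur. So if $X$ does not project onto $AB$, then one of these two angles is obtuse. Since $X$ is inside $F$, we have $\angle XAB+\angle XAC=\angle A<\pi$, so at each vertex at most one of the two angles formed with $X$ is obtuse.

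\emph{Step 2: obtuse angles force distance inequalities.} Suppose $\angle XAB>\pi/2$. The spherical cosine law gives
$$\cos|XB|=\cos|XA|\cos|AB|+\sin|XA|\sin|AB|\cos\angle XAB<\cos|XA|\cos|AB|.$$
Since $\cos|XA|>0$ by the core condition, this yields $\cos|XB|<\cos|XA|$ in both cases: if $\cos|AB|\ge0$ the right side is at most $\cos|XA|$, and if $\cos|AB|<0$ it is negative. Hence $|XB|>|XA|$. In words, an edge can fail only at its endpoint nearer to $X$, and the far endpoint is strictly farther.

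\emph{Step 3: the farthest vertex.} Let $A$ be a vertex with $|XA|$ maximal. By Step 2 both angles at $A$ are acute. So $AB$ fails only if $\angle XBA>\pi/2$, and $AC$ fails only if $\angle XCA>\pi/2$. Now consider edge $BC$.
\begin{itemize}
\item If $AB$ fails, then $\angle XBA>\pi/2$, so $\angle XBC<\pi/2$ by Step 1.
\item If $AC$ fails, then likewise $\angle XCB<\pi/2$.
\end{itemize}
Thus if $AB$ fails, edge $BC$ fails only if $\angle XCB>\pi/2$. By Step 2 this needs $|XB|>|XC|$, and then $\angle XCA<\pi/2$, so $AC$ projects. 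Symmetrically, if $AC$ fails and $BC$ fails, then $AB$ projects. Hence two failures that include $BC$ are impossible, and the only remaining bad case is that $AB$ and $AC$ fail simultaneously, with $\angle XBA>\pi/2$ and $\angle XCA>\pi/2$ (in which case $BC$ projects).

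\emph{Step 4: the main obstacle.} I expect excluding this last configuration to be the main obstacle. Geometrically, it says that $X$ lies beyond both great circles through $B\perp AB$ and through $C\perp AC$, while still inside $F$ and within distance $\pi/2$ of $A$.

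I would first try to show that the intersection of these two half-spheres with $F$ lies outside the open hemisphere centred at $A$. The idea is that the two perpendicular great circles meet at the pole-type point at distance $\ge\pi/2$ from $A$, in the spherical analogue of the circumcircle-diameter picture. Failing that, I would write $\cos|XA|<\cos|XB|\cos|AB|$ and $\cos|XA|<\cos|XC|\cos|AC|$, and combine them with the decomposition $\angle A=\angle XAB+\angle XAC$ via a vector computation in $\mathbb{R}^3$. The goal is to contradict $\langle X,A\rangle>0$.
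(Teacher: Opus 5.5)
Your Steps 1 and 2 are correct, but there is a genuine gap exactly where the lemma has content.

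In Step 3, assuming $AB$ and $BC$ both fail, you derive that $AC$ projects and then declare this case impossible. That is a non sequitur. A configuration in which exactly one edge projects is precisely what the lemma must exclude, so no contradiction has been reached. The configuration $\angle XBA>\pi/2$, $\angle XCB>\pi/2$ survives everything you have proved: Step 2 only yields $|XA|>|XB|>|XC|$, which is consistent with $A$ being the farthest vertex. The same holds when $AC$ and $BC$ fail. What Step 3 actually shows is only that not all three edges fail. The case you isolate in Step 4 is also not proved; you only list possible strategies. So every configuration with two failing edges, i.e.\ the whole statement, remains open.

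The missing ingredient is the angles at $X$. Since $X$ is interior, $\angle AXB+\angle BXC+\angle CXA=2\pi$ with each angle $<\pi$, so at most one of them is $\le\pi/2$.

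Now let an edge $PQ$ fail with $\angle XPQ>\pi/2$. Your Step 2 gives $|XQ|>|XP|$, hence $\angle XQP<\pi/2$, since otherwise Step 2 would also give $|XP|>|XQ|$. The polar cosine law in the triangle $XPQ$ reads
\[
\cos\angle XPQ=-\cos\angle PXQ\,\cos\angle XQP+\sin\angle PXQ\,\sin\angle XQP\,\cos|XQ| .
\]
We have $\cos\angle XPQ<0$, $\cos\angle XQP>0$, and $\cos|XQ|>0$ by the core condition. If $\cos\angle PXQ\le 0$, the right-hand side would be positive, so this forces $\angle PXQ<\pi/2$.

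Hence each failing edge subtends an acute angle at $X$. Two failing edges would then force the third angle at $X$ to exceed $\pi$, a contradiction. This one argument disposes of both your Step 3 cases and your Step 4 case.

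It is essentially the paper's proof. There, assuming $SQD^{sph}_X$ has only one minimum on $\partial F$, the unique maximum is a vertex $A$, and each of $B$, $C$ carries an obtuse angle. Either both obtuse angles lie in $BXC$, in which case $X$ projects onto $AB$ and $AC$. Or one of them lies in a subtriangle whose angle at $X$ is also obtuse, which the same cosine law rules out.
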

\begin{proof}

Assume the contrary. Then  the function $SQD^{sph}_X: \partial F\rightarrow \mathbb{R}$ has exactly one short maximum attained, say, at the vertex $A$. The vertices $B$ and $C$ are not maxima. Then there exists an obtuse angle incident to the vertex $B$ and  an obtuse angle incident to the vertex  $C$,  see Fig. \ref{pic2cases}.

\textbf{Case 1, } Fig. \ref{pic2cases}. These two obtuse angles lie in different triangles  $AXB$, $BXC$, $CXA$. Then one of them (say, $AXB$) necessarily has an obtuse angle incident to $X$. The third angle in this triangle is acute. Finally $\angle AXB>\pi/2, \angle ABX > \pi/2, \angle BAX < \pi/2, |XA|<\pi/2$. Such a triangle does not exist.

\textbf{Case 2.} The obtuse angles are $XBC$ and $XCB$. Then we obtain local minima of $SQD_X^{sph}$ at $AB$ and $AC$. 
  \end{proof}
  
  \bigskip
     
\begin{figure}[h]\label{pic2cases}
 \includegraphics[width=0.8\linewidth]{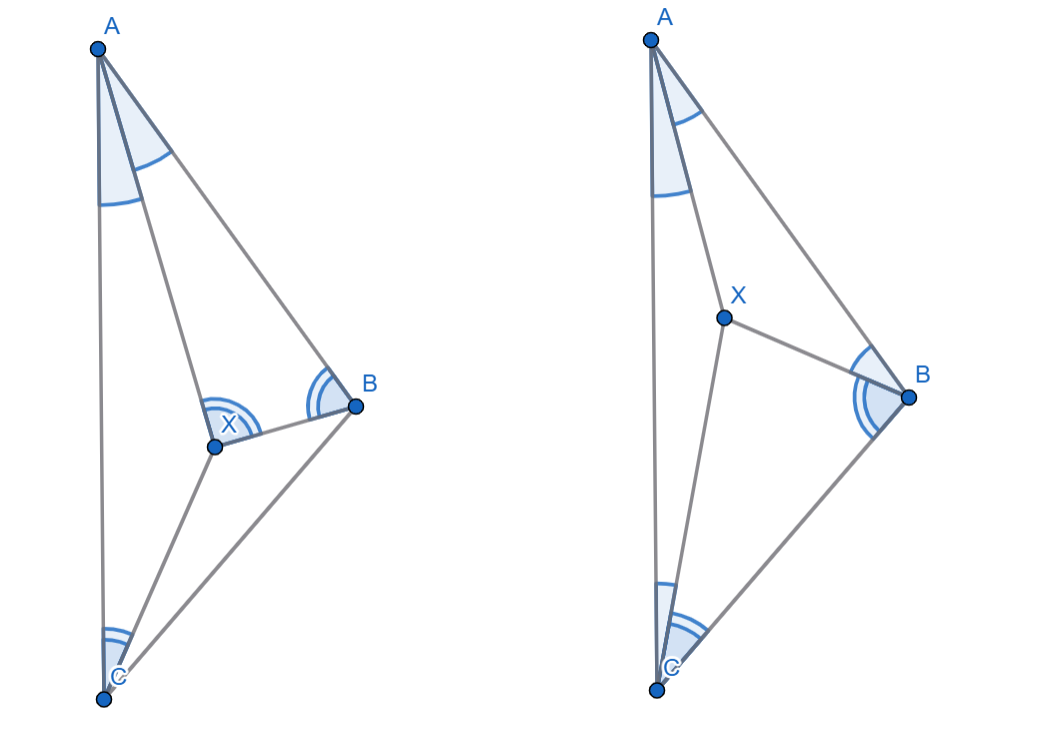}
  \caption {Two cases for Lemma \ref{lemmaTriangTwoEdges}. Acute angles are marked by single arcs, obtuse angles are marked by double.}
  \label{pic2cases}
\end{figure}

  \bigskip
  
\begin{lemma}\label{lemmaInc}
If $\Delta$ is a perfect  spherical  tetrahedron, then the spherical squared distance function $SQD^{sph}_{O_\Delta}$ has at least $10$ short critical points.
\end{lemma}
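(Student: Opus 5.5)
The plan is to count the critical points of $SQD^{sph}_{O_\Delta}$ on $\partial\Delta\cong S^2$ by type, and then close the count with the Morse relation $\#\min-\#\text{saddles}+\#\max=2$. Since $\Delta$ is perfect, $O=O_\Delta\in Core(\Delta)$, so all critical points are short. I will assume, as in the appendix, that the function behaves as a Morse function for this $y$; this genericity assumption is the point I would need to check against Section \ref{SecGen}.

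\textbf{Minima.} The inscribed sphere is tangent to all four facets. Hence the foot $X_F$ of the perpendicular from $O$ to the great $2$-sphere spanned by a facet $F$ lies in the interior of $F$, and it is a local minimum on $F$. Each facet carries at most one such critical point. Critical points lying on edges are saddles by Lemma \ref{lemmaSaddle}, and critical points at vertices can only be maxima. So there are exactly $4$ minima.

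\textbf{Saddles.} First I show $X_F\in Core(F)$. For a vertex $V$ of $F$, the right spherical triangle $OX_FV$ satisfies
$$\cos|OV|=\cos|OX_F|\cos|X_FV|.$$
Since $|OV|<\pi/2$, we have $\cos|X_FV|\ge\cos|OV|>0$, hence $|X_FV|<\pi/2$.

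Lemma \ref{lemmaTriangTwoEdges} then says that $X_F$ projects onto at least two edges of $F$. By the spherical three-perpendiculars theorem, the projection of $O$ onto an edge $e\subset F$ coincides with the projection of $X_F$ onto $e$. So $O$ projects onto at least two edges of each facet, and by Lemma \ref{lemmaSaddle} each such edge contains a short saddle. Every edge lies in exactly two facets, so the number of saddles satisfies $s\ge 4\cdot 2/2=4$.

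\textbf{Maxima and total.} The Morse relation on $S^2$ gives $4-s+\#\max=2$, so $\#\max=s-2\ge 2$. The total number of short critical points is therefore
$$4+s+(s-2)=2s+2\ge 10.$$

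The step I expect to be the main obstacle is the transfer from the facet to the tetrahedron. This means justifying carefully that $X_F$ lies in $Core(F)$ and that projecting $O$ onto an edge agrees with projecting $X_F$ onto it in the spherical setting. I would handle both through the right-triangle identities above. Beyond that, the remaining issue is the nondegeneracy of the critical points needed for the Morse count.
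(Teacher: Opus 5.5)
Your proof is correct and follows the paper's argument. It uses four minima at the facets and at least four saddles, obtained by applying Lemma~\ref{lemmaTriangTwoEdges} to each facet, then Lemma~\ref{lemmaSaddle}, and counting each edge twice; it closes with the Euler characteristic count on $\partial\Delta\cong S^2$ that the paper calls immediate, while your checks that $X_F\in Core(F)$ via $\cos|OV|=\cos|OX_F|\cos|X_FV|$ and that projecting $O_\Delta$ onto an edge of $F$ agrees with projecting $X_F$ onto it fill in steps the paper leaves implicit.
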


\begin{proof}
The maxima of $SQD^{sph}_{O_\Delta}$ are attained at some vertices, and all maxima are short. Local minima are attained at some facets and saddle points at some edges, all of them are also short. Prove that $SQD^{sph}_{O_\Delta}$ has at least $4$ saddles which will immediately imply the desired statement. 

Since $O_{\Delta}$ is the center of the inscribed sphere, it projects to all the facets.

By Lemmata \ref{lemmaSaddle}, \ref{lemmaTriangTwoEdges}, each facet contributes at least two critical edges. Each edge  occurs twice, so
 $O_\Delta$ projects onto at least 4 edges of $\Delta$. 
\end{proof}

\begin{prop}\label{propInc}
Suppose that a generic convex polytope $\mathbf{P}\subset \mathbb{R}^4$ has at least one perfect vertex. Then $\mathbf{P}$ contains a point with at least $14$ emanating normals.
\end{prop}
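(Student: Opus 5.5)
The plan is to run the bifurcation/Morse-theoretic argument from \cite{NasPan, NasPanSiersma} (recalled in the Appendix), with the starting point chosen at the perfect vertex. Let $V$ be a perfect vertex of $\mathbf{P}$, and let $O=O_{Spher(V)}$ be the incenter of the spherical tetrahedron $Spher(V)$. Consider the ray from $V$ into $\mathbf{P}$ in the direction $O$, and take a point $y$ on it very close to $V$.

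\textbf{Step 1: normals near $V$.} Critical points of $SQD_y$ on the faces incident to $V$ correspond to critical points of $SQD^{sph}_{O}$ on $\partial Spher(V)$, up to a small perturbation. More precisely, a face $F\ni V$ carries a critical point of $SQD_y$ close to $V$ exactly when $O$ projects onto the corresponding face of $Spher(V)$. The index is shifted by one, because the vertex $V$ itself becomes a local minimum.

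\textbf{Step 2: count at $y$.} Since $V$ is perfect, Lemma \ref{lemmaInc} applies. It gives at least $10$ short critical points of $SQD^{sph}_O$: at least $4$ facets (all of them, since $O$ projects onto every facet), at least $4$ edges, and at least $2$ maxima at vertices. The minimum at $V$ gives one more, and the global maximum of $SQD_y$ on $\partial\mathbf{P}$, attained at a far vertex, gives another. So $y$ already has at least $12$ normals.

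\textbf{Step 3: two more normals.} The remaining task is to find two further critical points, and this uses the Morse-theoretic balance $\sum(-1)^{\mathrm{ind}}=\chi(S^3)=0$. Among the critical points near $V$, viewed as critical points of $SQD_y$:
\begin{itemize}
\item the $4$ facets give index-$0$ points once lifted;
\item the $\geq 4$ edges give index-$1$ points;
\item the maxima give index-$2$ points.
\end{itemize}
I would first carry out this lifting carefully: a critical point of $SQD^{sph}$ of index $k$ on $S^3$ corresponds to a critical point of index $k+1$ of the cone function near $V$. So the near-$V$ points are the minimum at $V$ (index $0$), critical points in the $3$-faces (index $1$), points in $2$-faces (index $2$), and points on edges from $V$ (index $3$). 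Hence the near cluster contributes $1-4+(\geq4)-(\geq2)$, and the far global maximum contributes $-1$. If the alternating sum is not yet $0$, additional far critical points are forced, each with a definite parity.

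\textbf{Main obstacle.} The hardest step is showing that the far part of $\partial\mathbf{P}$ contributes at least two extra critical points beyond the global maximum, rather than just one, or zero when the parity already balances. If the Euler count alone does not force this, I would use a relative-homology argument instead: the near cluster is a local picture of a neighbourhood of $V$. Alternatively, I would move $y$ along the ray and track bifurcations: as $y$ leaves $V$, pairs of critical points are born via the "active region" mechanism of Section \ref{SecActReg}. The claim to aim for is that on each sublevel set $\{SQD_y\le t\}$ the topology must pass from the $3$-ball neighbourhood around $V$ to the full $S^3$. Doing so requires at least one index-$3$ point, and the index-$2$ points from the near cluster must be cancelled by index-$3$ points, as in the $2n+4$ argument of \cite{NasPan2}. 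This adds $2$ to $12$, giving $14$.
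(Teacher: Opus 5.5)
There is a genuine gap, and it is in Step~3. Your index bookkeeping is wrong. Since $O\in Core(\Delta)$, $\Delta=Spher(V)$, every edge at $V$ makes an acute angle with $y-V$, so $V$ is a local \emph{maximum} of $SQD_y$, not a minimum. The correspondence with short critical points of $SQD^{sph}_O$ also preserves indices rather than shifting them:
\begin{itemize}
\item facets of $\Delta$ correspond to minima on $3$-faces;
\item edges of $\Delta$ correspond to index-$1$ points on $2$-faces;
\item vertices of $\Delta$ correspond to index-$2$ points on edges of $\mathbf{P}$ through $V$.
\end{itemize}
Your two bullet lists in Step~3 contradict each other on exactly this point. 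With the correct indices the $12$ points have index vector $(c_0,c_1,c_2,c_3)=(4,s,s-2,2)$ with $s\ge 4$. Then $\sum(-1)^i c_i=0=\chi(S^3)$, and all Morse inequalities for $S^3$ already hold. Your sublevel-set requirement is met too, since the two index-$2$ points are matched by the two index-$3$ points $V$ and $V'$. So no argument about $SQD_y$ at a \emph{single} point $y$ near $V$ can force more than $12$ normals, whether by Euler characteristic, relative homology, or sublevel-set topology. With your shifted indices the alternating sum would come out as $-2$ and would spuriously ``force'' two extra points.

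The missing idea is the dynamic argument, which you mention only in passing. Move $y$ along the ray from $V$ through $O$ and stop at the first event. Parity does part of the work: $\chi(S^3)=0$ forces the number of critical points to be even, so $13$ already gives $14$, and you may assume exactly $12$ at the start. There are three possible first events.
\begin{itemize}
\item If $y$ reaches $\partial\mathbf{P}$ first, it does so through a facet not incident to $V$. Just before that, the foot of the perpendicular to this facet is a new minimum, giving $13$, hence $14$.
\item If the first event is a birth, you get $14$ at once.
\item A death must be excluded, and this is where the geometry enters. The $11$ critical points near $V$ are projections of $y$ onto affine subspaces through $V$, so they move homothetically from $V$ and their mutual distances grow. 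The only possible collision is an index-$2$ point on an edge $VV'$ meeting the far maximum $V'$. That would need $|Vy|>|VV'|$, which is impossible because $V'$, the maximum of $SQD_V$, is the farthest point of $\mathbf{P}$ from $V$.
\end{itemize}
Your claim that pairs are born ``via the active region mechanism'' is unfounded, since births are not guaranteed. What the argument needs is that the ray must eventually exit $\mathbf{P}$, and that every possible first event yields $14$.
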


\begin{proof}
First remind that if a point $y\in \mathbf{P}$ lies very close to a vertex $V$, then the critical points of $SQD_y$ lying on the faces that are incident to $V$ (excluding the point $V$) are in bijection with short critical points of $SQD_y^{sph}$.

Next, we make use of a technique developed  in \cite{NasPan}, \cite{NasPan2}.

Pick the ray $VO_\Delta$, where $\Delta = Spher(V)$. Let the point $y$ keep moving along the ray with starting position $y = V$. Just after the start we have (at least) $12$ critical points of $SQD_y$: a local maximum at $V$,  at least $10$ critical points on the faces incident to $V$ (by Lemma \ref{lemmaInc}) and a global maximum. If there are more critical points, their total number is at least $14$.

The point $y$ keeps moving until the \textit{first event},  which is either a bifurcation of $SQD_y$,  or  the point $y$ leaves the
interior of $\mathbf{P}$. We have the following case analysis:

(a) The point $y$ leaves the polytope $\mathbf{P}$. This means that $y$ hits one
of the facets which is not incident to the vertex $V$. Right before this,
$y$, we have  an additional minimum attained at this facet. This minimum is not mentioned among $12$ <<old>> critical points. The <<old>> critical points persist since there was no bifurcation.

(b)The first event is a birth of two new critical points. This gives us $14$ normals.

(c) One of the <<old>> critical points dies. The pairwise distances between $11$  <<old>>  critical points (excluding the global maximum) keep increasing, so  they cannot collide. 
The only remaining case is if some 2-saddle $s$ meets the global maximum $V'$. Recall
that $s$ is the orthogonal projection of $y$ onto one of the edges incident to $V$. Therefore, $V'$ is the endpoint of this edge. However, if $s$
meets $V'$ before $y$ leaves $\mathbf{P}$, then $|Vy|>|VV'|$, so $V'$ can't be a maximum of $SQD_y$ for the point $y=V$.

\end{proof}

Now  Theorem \ref{Thmmain} follows from Proposition \ref{propInc} and the following lemma:

\begin{lemma}
Let $O$ be the center of the inscribed sphere of a simplex $\mathbf{P}$. Then the maximum of $SQD_O$ is attained at a perfect vertex.
\end{lemma}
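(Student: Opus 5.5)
The plan is to identify the center $O_\Delta$ of the inscribed sphere of $\Delta=Spher(V)$ with the direction of the ray $VO$. After that, the perfectness condition $O_\Delta\in Core(\Delta)$ becomes a statement about angles at $V$, which follows from the maximality of $|OV|$ by an elementary inequality.

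\textbf{Step 1: $O_\Delta$ is the direction of $VO$.} Let $V$ be a vertex with $|OV|$ maximal among the vertices of $\mathbf{P}$; maxima of $SQD_O$ on the boundary of a simplex are attained at vertices. Identify $Spher(V)$ with the set of unit vectors $u$ pointing from $V$ into $\mathbf{P}$.

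The four facets of $\mathbf{P}$ through $V$ give four great spheres bounding $\Delta$. For a unit vector $u$ in the cone, the spherical distance from $u$ to the great sphere of a facet $F\ni V$ equals $\arcsin\, d(V+u,\mathrm{aff}F)$. Hence a point of $\Delta$ is spherically equidistant from these four great spheres exactly when the corresponding ray from $V$ is Euclidean-equidistant from the four hyperplanes.

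The incenter $O$ is at distance $r$ from all five facets, in particular from the four through $V$, and $O\in Int\,\mathbf{P}$. Therefore $u_0=(O-V)/|O-V|$ lies in $Int\,\Delta$ and has equal spherical distance $\arcsin(r/|OV|)$ to all four facets of $\Delta$. The ball of that spherical radius around $u_0$ lies in $\Delta$ and is tangent to all four facets, so it is inscribed. By uniqueness of the inscribed sphere of a (spherical) simplex, stated among the observations above, $O_\Delta=u_0$.

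\textbf{Step 2: the core condition.} The vertices of $\Delta$ are the directions $(W-V)/|W-V|$ for the four other vertices $W$ of $\mathbf{P}$. The requirement $|O_\Delta W|<\pi/2$ is therefore equivalent to $(W-V)\cdot(O-V)>0$. Writing $W-V=(W-O)+(O-V)$ and using Cauchy--Schwarz together with $|W-O|\le|V-O|$ gives
$$(W-V)\cdot(O-V)=|O-V|^2+(W-O)\cdot(O-V)\ge |O-V|^2-|W-O|\,|O-V|\ge 0 .$$
Equality would force both $|W-O|=|V-O|$ and $W-O=t(V-O)$ with $t>0$. Together these give $W=V$, a contradiction. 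So the inequality is strict for every $W\neq V$, hence $O_\Delta\in Core(\Delta)$ and $V$ is perfect.

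Note that no genericity is needed here, since ties in the maximum are harmless. The only point requiring care is Step 1, namely the translation between spherical distances to great spheres and Euclidean distances to hyperplanes through $V$, together with the appeal to uniqueness of the inscribed sphere. I expect this to be the main (though mild) obstacle. Step 2 is a routine computation.
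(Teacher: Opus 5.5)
Your proof is correct and follows the paper's route: the paper only asserts that $O$, $O_\Delta$, $V$ are collinear and that this gives $O_\Delta\in Core(\Delta)$. Your Step 1 (distances scale along rays from $V$, plus uniqueness of the spherical incenter) and Step 2 (acuteness of $\angle OVW$ from the maximality of $|OV|$, ties included) fill in exactly those two implicit claims.
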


\begin{proof} Indeed, let $V$ be this maximum and $\Delta = Spher(V)$. The points $O, O_{\Delta}, V$ are collinear, so $O_{\Delta} \in Core(\Delta)$.
\end{proof}

\section{Appendix}\label{SecAppendix}

Most of the definitions and lemmata of the section are borrowed from \cite{NasPanSiersma}  and \cite{NasPan}; we repeat them  briefly for the sake of completeness.
\subsection{Active regions and bifurcation set}\label{SecActReg}
Let $\mathbf{P}\subset \mathbb{R}^4$ be a compact simple convex polytope with non-empty interior.

Let $F$ be a face of  $\mathbf{P}$ (that is, either a facet, a $2$-face, an edge, or a vertex). The \textit{active region} $\mathcal{AR}(F)$ is the set of all points $y\in Int\, \mathbf{P}$ such that $F$ contains the base of some normal emanating from $y$. 

The\textit{ bifurcation set}  $\mathcal{B}(\mathbf{P})$ is the union of the boundaries of active regions. It is the piecewise linear counterpart of the focal set that appears in the smooth case.

\medskip

Although the function $SQD_y$ is  non-smooth,
it can be treated as a Morse function.  So counting critical points  of $SQD_{y}$ amounts to counting normals, exactly as it is in the smooth  case.

\begin{lemma}

For $y \notin \mathcal{B}(\mathbf{P})$, we have:
\begin{enumerate}
  \item Local maxima of $SQD_y$  are attained at some  vertices of $\mathbf{P}$.
  \item Local minima of $SQD_y$ are attained at some facets of $\mathbf{P}$.
  \item Saddles with Morse index $2$ of $SQD_y$ are attained at some  edges of $\mathbf{P}$.
   \item Saddles with Morse index $1$ of $SQD_y$ are attained at some  $2$-faces of $\mathbf{P}$.
   \item If $y$ crosses transversally one sheet of $\mathcal{B}(\mathbf{P})$, a pair of critical points of  $SQD_{y}$ either dies or is born. This situation is called a bifurcation.

\end{enumerate}

\end{lemma}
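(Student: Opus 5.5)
The plan is to reduce all five items to a local description of $SQD_y$ near a point $z$ lying in the relative interior of a face $F$ of $\mathbf{P}$, with $\dim F=k\in\{0,1,2,3\}$. Let $N(F)$ denote the (outer) normal cone of $\mathbf{P}$ at $F$; it has dimension $4-k$. Let $T_z$ be the tangent cone of $\partial\mathbf{P}$ at $z$. Near $z$, the boundary $\partial\mathbf{P}$ is the union of the faces containing $F$, so $T_z$ splits as $\mathrm{lin}(F)\times C$, where $C$ is a $(3-k)$-dimensional polyhedral cone, namely the boundary of the cone dual to $N(F)$ intersected with the orthogonal complement of $F$. On $\mathrm{lin}(F)\times C$ the function is
$$SQD_y(z+v+w)=|z-y|^2+2\langle z-y,v\rangle+|v|^2+2\langle z-y,w\rangle+O(|w|^2)$$
for $v\in\mathrm{lin}(F)$ and $w\in C$ (and cross terms).

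Step 1 (criticality). The point $z$ is a (PL) critical point of $SQD_y$ iff two conditions hold. First, $\langle z-y,v\rangle=0$ for all $v\in\mathrm{lin}(F)$, i.e. $z$ is the orthogonal projection of $y$ onto $\mathrm{aff}(F)$. Second, $z-y\in N(F)$, i.e. the line $yz$ is a normal emanating from $y$ in the sense of the Definition. I will define $\mathcal{B}(\mathbf{P})$ concretely as the set of $y$ for which, for some $F$, either the projection of $y$ to $\mathrm{aff}(F)$ lies on the relative boundary of $F$, or $z-y$ lies on $\partial N(F)$. These are exactly the boundaries of the active regions $\mathcal{AR}(F)$, which are open polyhedral sets defined by these two open conditions. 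For $y\notin\mathcal{B}$, every critical point therefore has $z\in\mathrm{relint}\,F$ and $z-y\in\mathrm{int}\,N(F)$. The second condition gives $\langle z-y,w\rangle<0$ for every nonzero $w\in C$, because $C$ lies in the polar of $N(F)$ modulo $\mathrm{lin}(F)$ and $z-y$ is interior. Hence $SQD_y$ strictly decreases, at linear rate, in all $3-k$ transverse directions. Along $F$ it is a positive definite quadratic with minimum at $z$.

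Step 2 (Morse index). Following the standard PL Morse theory used in \cite{NasPan}, the germ at $z$ is PL-equivalent to a nondegenerate critical point of index $3-k$. Concretely, the lower link is the link of $C$, which is homeomorphic to $S^{2-k}$. This gives items (1)--(4): vertices give maxima ($k=0$), edges give 2-saddles, 2-faces give 1-saddles, and facets give minima ($k=3$). It also follows that no critical points lie elsewhere, for example not at a vertex that fails the normal cone condition.

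Step 3 (bifurcation). Let $y$ cross a generic point of one sheet of $\mathcal{B}$ transversally. I will show the sheet is of one of two equivalent forms, namely a pair $G\subset F$ with $\dim F=\dim G+1$, and the condition on the sheet is that the projection $z$ of $y$ on $\mathrm{aff}(F)$ lies on $G$ with $z-y\in N(F)$. At such $y$, the projection of $y$ on $\mathrm{aff}(G)$ coincides with $z$. Moreover $z-y$ lies on the facet of $N(G)$ that equals $N(F)$. So the two defining conditions for $F$ and for $G$ degenerate simultaneously, on the same hyperplane through $y$. On one side both $z_F$ and $z_G$ are critical; on the other, neither is. I verify this by a two-dimensional computation in the plane spanned by the normal direction of $G$ inside $F$ and the corresponding ray of $N(G)$: the sign of a single linear functional of $y$ controls both conditions, with opposite orientation conventions that make them hold together. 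The indices are $3-\dim F$ and $3-\dim G$, which differ by one, so this is a birth or death of a pair. Genericity of the crossing point excludes two sheets meeting.

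The main obstacle is Step 3. I must show that every boundary piece of every active region is of this paired type, and that the two conditions switch together rather than independently. I would first attempt the direct linear-algebra check above. As a fallback, I would use conservation of the Euler characteristic: the signed count of critical points equals $\chi(\partial\mathbf{P})=\chi(S^3)=0$, and a single critical point cannot appear alone. Combined with the locality of the change, this forces a pair of adjacent indices.
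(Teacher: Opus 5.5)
The paper gives no proof of this lemma; it is recalled in the Appendix from the authors' earlier work, so there is no in-text argument to compare yours with. Your local analysis is the standard route and is correct in substance: you split the boundary near $z\in\mathrm{relint}\,F$ as $\mathrm{lin}(F)\times C$, identify critical points with feet of normals, read off the index $3-\dim F$ from the lower link, and explain bifurcations by pairs $G\subset F$ of adjacent dimensions.

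The computation you leave for Step 3 goes through. Let $y_0$ be a generic point of the sheet, with foot $z_0\in\mathrm{relint}\,G$, and let $\nu$ be the unit outer normal of $G$ inside $\mathrm{lin}(F)$. Then $N(G)$ lies in the half-space $\{\langle u,\nu\rangle\ge 0\}$ of $\mathrm{lin}(G)^\perp$, and $N(F)=N(G)\cap\nu^\perp$. Hence near $y_0$ both ``$\mathrm{proj}_{\mathrm{aff}F}(y)\in\mathrm{relint}\,F$'' and ``$\mathrm{proj}_{\mathrm{aff}G}(y)-y\in\mathrm{int}\,N(G)$'' are equivalent to the single inequality $\langle y-z_0,\nu\rangle<0$.

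Three things need repair.

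First, the explicit $\mathcal{B}$ you write down in Step 1 is strictly larger than the union of the boundaries of the active regions, so ``these are exactly the boundaries'' is false. The locus where $\mathrm{proj}_{\mathrm{aff}F}(y)$ lies on a facet $G$ of $F$ is a piece of the hyperplane $\mathrm{aff}(G)+\mathrm{lin}(F)^\perp$. Only its part with $z-y\in N(F)$ bounds $\mathcal{AR}(F)$. On the rest, and similarly where $z-y\in\partial N(F)$ but $z\notin F$, no normal is involved and nothing happens when $y$ crosses. So item (5) would be false for your $\mathcal{B}$. Keep the conjunction with $z\in F$ and $z-y\in N(F)$, which is exactly what Step 3 already assumes.

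Second, the ``only if'' half of Step 1 is asserted, not proved: if $z\in\mathrm{relint}\,F$ is the foot of $y$ on $\mathrm{aff}(F)$ but $z-y\notin N(F)$, then $z$ is a regular point. This is what gives ``no critical points elsewhere''. It follows from your setup:
\begin{enumerate}
\item Let $K$ be the cone in $\mathrm{lin}(F)^\perp$ with $\partial K=C$. The vector $y-z$ lies in its interior, so $\ell=\langle z-y,\cdot\rangle$ is negative somewhere on $K$.
\item When $z-y\notin N(F)$, $\ell$ is also positive somewhere on $K$.
\item Thus $K\cap\{\ell=-1\}$ is a full-dimensional, unbounded, line-free polyhedron. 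Its relative boundary, which models the lower link inside $C$, is an open disk.
\item The $|v|^2$ term retracts away, so the whole lower link is contractible.
\end{enumerate}

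Third, drop the Euler-characteristic fallback. Invariance of $\sum(-1)^{\mathrm{ind}}$ is compatible with a minimum and a maximum appearing together, or with four points changing at once. So it cannot force a pair of adjacent indices; only the direct computation proves (5).

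Two minor points, neither affecting the conclusion: the germ is topologically, not PL, equivalent to a Morse germ; and the lower link is a regular neighbourhood of the link of $C$ rather than that link itself.
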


\textbf{Convention:}  We call local maxima (local minima) just maxima and minima for short.

\medskip

We say that a point $y\in Int(\mathbf{P})$ \textit{projects} to a face $F$  if the orthogonal projection of $y$ to the affine hull of $F$ lies in the interior of the face $F$.

\subsection{Genericity assumptions.}\label{SecGen}

In the paper we restrict ourselves to \textit{generic  polytopes}:

\begin{definition}\label{DefGen} A polytope $\mathbf{P}$ with the vertex set $\mathbf{V}$ is generic if  affine hulls of any two subsets of
$\mathbf{V}$ are neither parallel nor orthogonal, unless this is dictated by unavoidable reasons. For example,
  \begin{description}
  \item[i] No two edges  are orthogonal.
  \item[ii]  No edge is parallel to a face, unless they are incident. 
\end{description}
\end{definition}

Important observations are: (1) {For a generic polytope, the sheets of $\mathcal{B}(\mathbf{P})$ intersect transversally.}
(2) {Any polytope can be turned to a generic one by a close to $id$ projective transform.} (3) If $\mathbf{P}$ is generic, then any sheet of $\mathcal{B}(\mathbf{P})$ 
related to a face contains no vertices of $\mathbf{P}$ except for vertices of the face.

\medskip

\textbf{Acknowledgement.}
The work of Ivan Nasonov was performed at the Saint Petersburg Leonhard Euler International Mathematical Institute and supported by the Ministry of Science and Higher Education of the Russian Federation (agreement no. 075–15–2025–343)

\end{document}